\newtheorem*{thm*}{Theorem}
\newtheorem{thm}[subsection]{Theorem} 
\newtheorem{lemma}[subsection]{Lemma}       
\newtheorem{prop}[subsection]{Proposition} 
\newtheorem{defn}[subsection]{Definition}
\newtheorem{ex}[subsection]{Example}
\newtheorem{remark}{Remark}  
\newcommand{\Z}{{\mathbb{Z}}}
\newcommand{\N}{\mathcal{N}}
\newcommand{\C}{\mathscr{C}}
\newcommand{\Aut}{\operatorname{Aut}}
\newcommand{\sC}{\mathscr{C}}
\newcommand{\CPA}{\operatorname{CPA}}
\newcommand{\GLA}{\operatorname{GLA}}
\newcommand{\Aff}{\operatorname{Aff}}
\newcommand{\id}{\operatorname{id}}
\newcommand{\Stab}{\operatorname{Stab}}
\newcommand{\xRightarrow}[2][]{\ext@arrow 0359\Rightarrowfill@{#1}{#2}}
\title[]{Automorphism Groups of nilpotent Lie algebras associated to  certain  graphs}
\author{Debraj Chakrabarti}
\address{Department of Mathematics, Central Michigan University, Mount Pleasant, MI 48859}
\email{chakr2d@cmich.edu}
\author{Meera Mainkar}
\address{Department of Mathematics, Central Michigan University, Mount Pleasant, MI 48859}
\email{maink1m@cmich.edu}
\author{Savannah Swiatlowski}
\address{Department of Mathematics, Central Michigan University, Mount Pleasant, MI 48859}
\email{swiat1sv@cmich.edu}
\thanks{Debraj Chakrabarti and Savannah Swiatlowski were partially supported by NSF grant DMS-1600371.}
\subjclass[2010]{17B30, 05C15, 05C25}
\begin{document}

\begin{abstract}
We consider a family of 2-step nilpotent Lie algebras associated to uniform complete graphs on odd number of vertices. We prove that the symmetry group of such a graph is the holomorph of the additive cyclic group $\Z_n$. Moreover, we prove that the (Lie) automorphism group of the corresponding nilpotent Lie algebra contains the dihedral group of order $2n$ as a subgroup.


\end{abstract}

 \maketitle

\section{Introduction} 
Many  classes of $2$-step nilpotent Lie algebras associated with various types of graphs have been studied recently from different points of view, see,  e.g.,  \cite{DM, DDM, GGI, F, FJ, M, N, PS, PT, R, LW}.  A {\em 2-step nilpotent Lie algebra} is a Lie algebra where each 3-fold Lie bracket  $[X, [Y, Z]]$ of elements $X, Y, Z$ of the Lie algebra is 0. The 3-dimensional Heisenberg Lie algebra is well-studied example of a 2-step nilpotent Lie algebra.  In \cite{DM}, the authors studied the automorphism group of a $2$-step nilpotent Lie algebra associated with  a simple graph  and  then classified the graphs which correspond to the 2-step nilpotent Anosov Lie algebras.  These  Lie algebras give rise to interesting hyperbolic dynamics on nilmanifolds.

In this paper, we consider a similar problem for an interesting class of edge-colored directed simple graphs $H_n$ where $n$ is an odd integer.  We begin by considering the  underlying {\em undirected} edge-colored graph $G_n$ of $H_n$.  The example $G_5$ occurred in the recent paper \cite[Example 5.7]{PS}, in connection with uniform Lie algebras. The graphs $G_n$ are  remarkable for having a large amount of symmetry which can be used for constructing other objects associated with it with nontrivial symmetry, e.g. Einstein solvmanifolds \cite{D}, infranilmanifolds  etc.  Interestingly, we found that,  $G_n$ arises naturally when we consider the cyclic group $\Z_n$ of $n$ elements  as a space on which $\Z_n$ acts by translations, i.e. as a {\em torsor} without a distinguished identity element.  In section \ref{G_n} we will give the  algebraic definition of $G_n$  but now we introduce it geometrically. For every odd integer $n$ we construct the edge-colored simple graph $G_n$ by  beginning with the complete graph on $n$ vertices $v_1, \ldots, v_n$ and thinking of these vertices as the vertices of a regular $n$-gon in the plane. We  color the edges with $n$ colors $c_1, \ldots c_n$ in such a way that  for every vertex $v_k$ the $\dfrac{n-1}{2}$ edges which are perpendicular to the axis of symmetry of the $n$-gon passing through $v_k$ are colored with the same color $c_k$. Below we illustrate this for $n = 5$.

\begin{figure}[h!]
\begin{tikzpicture}[-,>=stealth',shorten >=1pt,auto,
  thick,vertex/.style={circle,draw,fill,scale=.35,font=\sffamily\large\bfseries},node distance=3in,thick]

\node[vertex, label=above:{$v_1$}](X0)  at (90:3) {};
  \node[vertex, label=right:{$v_2$}](X1) at (18:3) {};
 \node[vertex, label=right:{$v_3$}] (X2)  at (306:3)  {};
 \node[vertex, label=left:{$v_4$}] (X3) at (234:3) {};
	 \node[vertex, label=left:{$v_5$}] (X4) at (162:3) {};

\path[every node/.style={font=\sffamily\small}]
    (X0) edge ["$c_4$",blue] (X1)
					 (X1) edge ["$c_5$",red] (X2)
		 (X2) edge ["$c_1$",green] (X3)
         (X2) edge ["$c_1$",green] (X3)
         (X3) edge ["$c_2$"] (X4)
         (X4) edge ["$c_3$", orange] (X0)
         (X4) edge ["$c_3$", orange] (X0)
        (X4) edge [blue]  node[sloped, below] {$c_4$}(X2)
  (X0) edge [red] node[sloped, above] {$c_5$} (X3)
	(X1) edge [green] node[sloped, above] {$c_1$}  (X4)	
    (X1) edge [green] node[sloped, above] {$c_1$} (X4)	
    (X2) edge []  node[sloped, above] {$c_2$}(X0)
     (X3) edge [orange]  node[sloped, below] {$c_3$}(X1)
     (X3) edge [orange]  node[sloped, below] {$c_3$}(X1)
    ;
\end{tikzpicture}
\end{figure}


In our first result,   we compute explicitly the group of symmetries $\CPA(G_n)$ of $G_n$  preserving the coloring structure which we call the {\em color permuting automorphisms}, see Definition \ref{def-cpa}. 

\begin{thm}\label{main1}
{\rm  $\CPA(G_n) \cong \Z_n \rtimes \Aut(\Z_n)$.}
 \end{thm}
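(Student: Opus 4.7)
The plan is to identify the vertex set $\{v_1,\ldots,v_n\}$ with $\Z_n$ and exploit that, since $n$ is odd, $2$ is invertible in $\Z_n$. The first step is to verify the concrete description of the coloring: the edge $\{v_i,v_j\}$ has color $c_k$ precisely when $2k\equiv i+j\pmod n$. This is because the axis through $v_k$ is perpendicular to $\{v_i,v_j\}$ exactly when $k$ is the cyclic midpoint of $i$ and $j$, and the oddness of $n$ makes such a midpoint unique.

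Next I would construct the candidate homomorphism
\[
\Phi:\Z_n\rtimes\Aut(\Z_n)\longrightarrow\CPA(G_n),\qquad (a,u)\longmapsto\bigl(v_i\mapsto v_{ui+a},\;c_k\mapsto c_{uk+a}\bigr).
\]
Since $(ui+a)+(uj+a)=u(i+j)+2a=2(uk+a)$ whenever $i+j=2k$, the pair $\Phi(a,u)$ preserves the coloring and therefore lies in $\CPA(G_n)$. That $\Phi$ is a group homomorphism and is injective follows from the standard description of the holomorph as the affine group of $\Z_n$: the only affine map fixing every element of $\Z_n$ is the identity.

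The main task is surjectivity. Given $\sigma\in\CPA(G_n)$ with vertex permutation $\pi$ and color permutation $\tau$, the key observation is that $v_k$ is the unique vertex not incident to any edge of color $c_k$: indeed, the $(n-1)/2$ edges of color $c_k$ are precisely the pairs $\{v_i,v_{2k-i}\}$ for $i\neq k$, which form a perfect matching on the remaining $n-1$ vertices. Consequently $\pi(v_k)=v_{\tau(k)}$ under the natural identification of $\tau$ with a permutation of $\Z_n$. Color preservation then translates into the functional equation
\[
i+j\equiv 2k\pmod n\;\Longrightarrow\;\pi(i)+\pi(j)\equiv 2\pi(k)\pmod n.
\]
Setting $f(x)=\pi(x)-\pi(0)$, taking $i=0,\ j=2k$ yields $f(2k)=2f(k)$, and then for arbitrary $a,b\in\Z_n$ I would take $k=(a+b)/2$ (valid since $2$ is a unit modulo $n$) to deduce $f(a)+f(b)=2f((a+b)/2)=f(a+b)$. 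Thus $f$ is an additive bijection of $\Z_n$, hence $f\in\Aut(\Z_n)$, so $\pi(x)=ux+\pi(0)$ and $\sigma=\Phi(\pi(0),u)$.

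The delicate step, and the one I would treat most carefully, is the canonical identification of vertices with colors via the \emph{missing-color} characterization of $v_k$. This pins down the color permutation in terms of the vertex permutation, so that color-compatibility reduces to a single functional equation on $\pi$. Once that reduction is in place, the oddness of $n$ is exactly what makes the Cauchy-type argument work; if $n$ were even, both the uniqueness of the cyclic midpoint and the matching structure behind the vertex-color correspondence would fail, so the hypothesis that $n$ is odd is being used in an essential way at two different points in the proof.
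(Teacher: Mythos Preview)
Your argument is correct and follows the same overall strategy as the paper: both identify $\CPA(G_n)$ with the affine group $\Aff(\Z_n)$ and then observe that $\Aff(\Z_n)\cong\Z_n\rtimes\Aut(\Z_n)$.

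The execution differs in one interesting respect. The paper normalizes the coloring as $c(\{i,j\})=i+j$ and simply reads off from the CPA condition $\tau(i)+\tau(j)=\phi(i+j)$ that $a+b=c+d\Rightarrow\tau(a)+\tau(b)=\tau(c)+\tau(d)$ (what it calls \emph{special}); it never needs to relate the color permutation to the vertex permutation. From there it checks directly that $\gamma(a):=\tau(a)-\tau(0)$ is a group automorphism. Your route instead uses the midpoint normalization together with the combinatorial observation that $v_k$ is the unique vertex missing color $c_k$, forcing $\pi=\tau$, and then runs a Cauchy-type argument via $f(2k)=2f(k)$. Your missing-color lemma is a pleasant structural fact not stated in the paper, but it is not strictly needed: the paper's approach is slightly more economical because it avoids pinning down the color permutation at all. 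Conversely, your framing as an explicit isomorphism $\Phi$ makes the semidirect-product structure transparent without the separate split-exact-sequence verification the paper carries out.
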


Here $\Aut(\Z_n)$ is the group of group-automorphisms of the cyclic group $\Z_n$. 
The semidirect product $\Z_n \rtimes \Aut(\Z_n)$ is known as the {\em holomorph} of $\Z_n$. 

This result is interesting because the graph $G_n$ is constructed out of the cyclic group $\Z_n$ and therefore the result expresses an aspect of the combinatorics  of this familiar object. 

As already mentioned, the real motivation for  considering these graphs comes from the theory of 2-step nilpotent Lie algebras. This idea goes back to \cite{DM} for simple graphs and has been extended by many authors  \cite{DM, DDM, GGI, F, FJ, M, N, PS, PT, R, LW}. In \cite{R, PS}, with each directed edge-colored graph $G$, a $2$-step nilpotent Lie algebra $\mathcal{N}_G$ was  associated and its properties were studied.   This construction is recalled in Section \ref{sec-graph} below.   
These Lie algebras can be thought as a quotient of the 2-step nilpotent Lie algebras associated with simple graphs as in \cite{DM}. In order to obtain a Lie algebra from an edge-colored graph, we will further need that the edges are directed. 
We assign a certain natural orientation of the edges to $G_n$ to obtain the  directed edge-colored simple graphs $H_n$ in Section \ref{sectionedgedirection}. We are interested in understanding the group $\Aut(\mathcal{N}_{H_n})$ of Lie algebra automorphisms of the corresponding Lie algebra $\mathcal{N}_{H_n}$. In the situation considered in \cite{DM}, each graph automorphism gives rise to a Lie algebra automorphism of the corresponding 2-step nilpotent Lie algebra. However, if we allow the repetition  of the edge-colors, then only certain type of graph automorphisms or symmetries can be extended to the automorphisms of the associated Lie algebra. We call those automorphisms as {\em graph Lie automorphisms} and we denote the group of all such automorphisms of a graph $G$ by $\GLA(G)$.  We compute explicitly the group $\GLA(G_n)$ and prove the following theorem. 

\begin{thm}\label{main2}
{\rm $\GLA(H_n) \cong D_n$, dihedral group of order $2n$.  Consequently $\Aut(\mathcal{N}_{H_n})$ contains a subgroup isomorphic to the dihedral group of order $2n$. }
\end{thm}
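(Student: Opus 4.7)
The plan is to identify $\GLA(H_n)$ as a subgroup of the color-permuting automorphism group $\CPA(G_n) \cong \Z_n \rtimes \Aut(\Z_n)$ computed in Theorem \ref{main1}, and then pin it down as the natural copy of the dihedral group $D_n = \Z_n \rtimes \{\pm 1\}$ sitting inside the holomorph. The consequence for $\Aut(\N_{H_n})$ will then follow from the functoriality of the construction $G \mapsto \N_G$ recalled in Section \ref{sec-graph}.

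First I would set up the inclusion. Every graph Lie automorphism of the directed edge-colored graph $H_n$, on forgetting edge directions, becomes a color-permuting automorphism of the underlying undirected graph $G_n$, giving an injection $\GLA(H_n)\hookrightarrow \CPA(G_n)\cong \Z_n\rtimes\Aut(\Z_n)$. Identifying the vertex set of $G_n$ with $\Z_n$ as in Section \ref{G_n}, a general element of the holomorph acts as $v_k\mapsto v_{ak+b}$ with $b\in\Z_n$ and $a\in(\Z/n)^\times$; I must determine exactly which such maps preserve the orientation assigned in Section \ref{sectionedgedirection} \emph{and} lift to a Lie algebra automorphism.

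Next I would exhibit $D_n$ as a subgroup of $\GLA(H_n)$ by producing two explicit generators: the rotation $\rho: v_k\mapsto v_{k+1}$ (corresponding to $a=1$) and the reflection $\sigma: v_k\mapsto v_{-k}$ (corresponding to $a=-1$, $b=0$). These generate a copy of $D_n$ inside the holomorph, and I would verify that (i) each preserves the color of every edge, which is immediate from the cyclic symmetric definition of the coloring, (ii) each either preserves or globally reverses the edge directions in $H_n$ in a way compatible with the orientation convention — $\rho$ directly preserves directions by construction, while $\sigma$ acts consistently because $n$ is odd and the reflection axis passes through a unique vertex — and (iii) the induced permutation of edges extends to a linear map on $\N_{H_n}$ which respects the bracket relations read off from $H_n$. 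This last step uses the general principle from \cite{R,PS} that a graph Lie automorphism canonically lifts to a Lie algebra automorphism, which simultaneously gives the injection $\GLA(H_n)\hookrightarrow \Aut(\N_{H_n})$ required for the final assertion.

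The main obstacle will be the reverse inclusion: showing that if $a\notin\{\pm 1\}$, then the map $v_k\mapsto v_{ak+b}$ is \emph{not} in $\GLA(H_n)$. Here I would pick a convenient pair of edges of $H_n$, with carefully chosen colors and orientations, and compute their images under an arbitrary $(a,b)$ in the holomorph. Using the explicit edge-orientation rule from Section \ref{sectionedgedirection}, I would show that any $a\neq\pm 1$ forces some edge to be mapped with the wrong orientation, or equivalently forces a sign contradiction in the induced bracket on $\N_{H_n}$. This combinatorial step is the delicate part and will hinge on the specific orientation convention; a plausible shortcut is to pass directly to the Lie algebra level and argue that extending such a map forces one of the defining bracket relations to change sign, contradicting the requirement that it be an automorphism. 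Once this is done, we conclude $\GLA(H_n)\cong D_n$, and the second sentence of the theorem follows from the injection $\GLA(H_n)\hookrightarrow \Aut(\N_{H_n})$ obtained above.
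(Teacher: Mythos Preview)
Your proposal is correct and follows essentially the same route as the paper: both embed $\GLA(H_n)$ into $\CPA(G_n)\cong\Z_n\rtimes\Aut(\Z_n)$, exhibit the rotations and the reflection $k\mapsto -k$ as elements, and then rule out affine maps $k\mapsto ak+b$ with $a\notin\{\pm 1\}$ via an orientation contradiction (the paper phrases this as computing $\Stab(0)$ via orbit--stabilizer, which is the same reduction). The paper's concrete realization of your ``delicate combinatorial step'' is the partition $\Z_n\setminus\{0\}=R\cup L$ with $R=\{1,\dots,\tfrac{n-1}{2}\}$ and $L=-R$: since the orientation of the edge $(m+i,m-i)$ is encoded exactly by $i\in R$, any $\chi\in\Stab(0)\subset\Aut(\Z_n)$ that lies in $\GLA(H_n)$ must send $R$ to $R$ or to $L$ globally (Lemma~\ref{lemma2}), and the key Lemma~\ref{lemma1}---that no nontrivial automorphism of $\Z_n$ stabilizes $R$ setwise---then forces $\chi=\pm\id$.
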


If $H_n$ is thought as above to be a regular $n$-gon in the plane along with all the diagonals which are colored and directed in a certain way, then $\GLA(H_n)$ can be thought of the Euclidean group of symmetries of this polygon, which is well-known to be the dihedral group of order $2n$.

The automorphism group of a  nilpotent Lie algebra plays an important role in studying  certain  Einstein homogeneous spaces  and  infranilmanifolds (see, e.g. \cite{DD,DV,LW}).  It would be very interesting to find a complete description of the automorphism group of $\N_{H_n}$.

\section{Edge-colored graphs  and  their Automorphisms}\label{Edge-ColoredGraphs}
In this section we recall some definitions (see \cite{PS} for example). Let $(S, E)$ denote a finite simple graph where $S$ is the set of vertices and $E$ is the set of edges.  We denote an edge by a 2-set $\{ \alpha, \beta \}$. Let $\C$ denote a finite set of {\em colors}. An {\em edge-coloring } 
is a surjective function $c : E \to \C$. We call a graph $G = (S, E, c: E \to \C)$  an {\em edge-colored} graph. 

Recall that a bijection $\sigma : S \to S$ is a {\em graph automorphism of } $(S, E)$ if the following holds: For all $\alpha, \beta \in S$, $\{\sigma(\alpha),  \sigma(\beta)\} \in E$ if and only if $\{\alpha, \beta\} \in E$. In this case, we extend $\sigma$ on the set $E$ by defining $\sigma(\{\alpha, \beta\}) = \{\sigma(\alpha), \sigma(\beta)\}$.

\begin{defn}\label{def-cpa}
{\rm Let $G = (S, E, c: E \to \C)$ be an edge-colored graph. A graph automorphism $\chi$ of $(S, E)$ is called a {\em color permuting automorphism} of $G$ if there exists a permutation $\phi$ of the set of colors  $\C$ such that $\phi \circ c = c \circ \chi$ on $E$.  The set of all color permuting automorphisms form a group which we denote by $\CPA(G)$. }
\end{defn}

\begin{ex}\label{edgecolored}
{\rm  Let $C_4$ denote a cycle graph on 4 vertices where the vertex set $S=  \{\alpha, \beta, \gamma, \delta\}$ and $E = \{\{\alpha, \beta\}, \{\beta, \gamma\}, \{\gamma, \delta\}, \{\alpha, \delta\}\}$. Let $\C = \{1, 2\}$. We define the edge-coloring $c : E \to \C$ by 
\[ c\left( \{\alpha, \beta\} \right) =   c\left(  \{\beta, \gamma\} \right) =  1\]
\[c\left( \{\gamma, \delta\} \right) =   c\left(  \{\delta, \alpha\} \right) =  2.\]

\begin{figure}[h!]
\begin{tikzpicture}[-,>=stealth',shorten >=1pt,auto,
  thick,vertex/.style={circle,draw,fill,scale=.35,font=\sffamily\large\bfseries},node distance=2in,thick]

\node[vertex, label=left:{$\alpha$}](X1) {};
  \node[vertex, label=right:{$\beta$}](X2) [right of=X1] {};
  \node[vertex, label=right:{$\gamma$}] (X3) [below of=X2]  {};
  \node[vertex, label=left:{$\delta$}] (X5)[below  of=X1]  {};

  \path[every node/.style={font=\sffamily\small}]
    (X1) edge ["$1$",blue] (X2)
		(X2)     edge ["$1$",blue]  (X3)
			(X3)		edge ["$2$",red]  (X5)
	(X5) edge["$2$", red] (X1)
				;
\end{tikzpicture}
\end{figure}
Let $\chi$ be  the permutation of $S$ given by $\chi = (\alpha \,\, \gamma)(\beta \,\, \delta)$, which is a graph automorphism of $(S,E)$.  Then the permutation of colors $\phi = (1\,\,2)$ satisfies $\phi \circ c = c \circ \chi$ on $E$ and hence $\chi$ is a color permuting automorphism of $C_4$ with the above coloring $c$. 

Note that  $\tau = (\alpha \,\, \beta \,\, \gamma \,\, \delta)$ is not a color permuting automorphism  of $C_4$ because \[c\left( \{\tau(\alpha), \tau(\beta)\} \right) = 1 \neq c\left( \{\tau(\beta), \tau(\gamma)\} \right).\]

It can be seen that $\CPA(C_4) = \{\id, (\alpha \,\, \gamma), (\beta \,\, \delta), (\alpha \,\, \gamma)(\beta \,\, \delta) \}$. \hfill$\square$

}
\end{ex}

A uniform graph is a special type of an edge-colored graph.  

\begin{defn}
{\rm We say that an edge-colored graph 
$(S, E, c: E \to \C)$ is a {\em uniform graph} if it satisfies the following properties.
\begin{enumerate}
\item No two  edges incident on the same vertex  have the same color, i.e. $c(\{\alpha, \beta \})\neq c(\{\alpha, \gamma\})$ if $\beta \neq \gamma$.
\item Each color occurs the same number of times, i.e. $|c^{-1}(\{c_i\}| = |c^{-1}(\{c_j\}|$ for all $c_i, c_j \in \C$. 
\end{enumerate}
}
\end{defn} 

\begin{ex}\label{4uniform}
{\rm  Consider the same uncolored graph $(S, E)$ as in Example \ref{edgecolored} and the same set of colors      $\C = \{1, 2\}.$ We define a new  edge-coloring $c : E \to \C$ by 
\[ c\left( \{\alpha, \beta\} \right) =   c\left(  \{\gamma, \delta\} \right) =  1\]
\[c\left( \{\beta, \gamma\} \right) =   c\left(  \{\delta, \alpha\} \right) =  2.\] 
We can see that  $G= (S, E, c: E \to \C)$ is a uniform graph and  $\CPA(G) \cong D_8$, the dihedral group with 8 elements.

\begin{figure}[h!]
\begin{tikzpicture}[-,>=stealth',shorten >=1pt,auto,
  thick,vertex/.style={circle,draw,fill,scale=.35,font=\sffamily\large\bfseries},node distance=2in,thick]

\node[vertex, label=left:{$\alpha$}](X1) {};
  \node[vertex, label=right:{$\beta$}](X2) [right of=X1] {};
  \node[vertex, label=right:{$\gamma$}] (X3) [below of=X2]  {};
  \node[vertex, label=left:{$\delta$}] (X5)[below  of=X1]  {};

  \path[every node/.style={font=\sffamily\small}]
    (X1) edge ["$1$",blue] (X2)
		(X2)     edge ["$2$", red]  (X3)
			(X3)		edge ["$1$", blue]  (X5)
	(X5) edge["$2$", red] (X1)
				;
\end{tikzpicture}
\end{figure}

The edge-colored graph in Example \ref{edgecolored} is not uniform. 
}

\end{ex}


\section{A  uniform graph associated to $\Z_n$}\label{G_n}

In this section we associate an edge-colored graph to  a cyclic group of odd order and compute its symmetries. We note however, that this construction and the computation are very general and can be done for any abelian group of odd order. 

Throughout we assume that  $n$ is an odd integer.  We let $\Z_n$ denote the cyclic group of order $n$ written additively and denote the elements of $\Z_n$ as $\{0, \ldots, n-1\}$.  Let $G_n$ be an edge-colored graph where the underlying uncolored graph is  a complete graph  with  vertex set  $\Z_n$ and where the set of colors $\sC$ is also $\Z_n$. We let  the color of an edge $\{i, j\}$ be $(i + j) \in \Z_n$  where  of course the addition $+$  means  addition modulo $n$ in $\Z_n$.  More precisely,  the edge-coloring in $G_n$ is given by $c: E \to  \sC$ is defined by $c (\{i, j\}) = i + j$ for all $i, j \in \Z_n$.

\begin{prop}
{\rm The edge-colored graph $G_n$ is a uniform graph.}
\end{prop}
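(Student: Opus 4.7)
The plan is to verify the two defining conditions of a uniform graph directly from the formula $c(\{i,j\}) = i+j$ in $\Z_n$, using the hypothesis that $n$ is odd in an essential way for the second condition.

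First I would check condition (1), that no two edges incident on a common vertex share a color. Suppose $\{i,j\}$ and $\{i,k\}$ are two edges of $G_n$ sharing the vertex $i$, and suppose $c(\{i,j\}) = c(\{i,k\})$. Then $i+j \equiv i+k \pmod n$, so $j \equiv k \pmod n$, hence $j = k$ as elements of $\Z_n$. Thus the two edges coincide, and distinct edges at a vertex must carry distinct colors.

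Next I would check condition (2), that every color appears the same number of times. Fix a color $s \in \sC = \Z_n$. I would count the edges $\{i,j\}$ with $i + j = s$ by first counting ordered pairs $(i,j)$ with $i \neq j$ and $i+j \equiv s \pmod n$. For each $i \in \Z_n$, there is a unique $j = s - i \in \Z_n$ satisfying $i+j=s$, and this $j$ differs from $i$ precisely when $2i \neq s$ in $\Z_n$. Here is where the oddness of $n$ enters: since $\gcd(2,n)=1$, the element $2$ is a unit in $\Z_n$, so the equation $2i = s$ has exactly one solution $i = 2^{-1}s$. Hence exactly one value of $i$ is excluded, giving $n-1$ ordered pairs and therefore $|c^{-1}(\{s\})| = \tfrac{n-1}{2}$ unordered edges of color $s$. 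As this count does not depend on $s$, condition (2) holds.

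I do not anticipate a genuine obstacle here; the statement is essentially an immediate consequence of the formula for $c$ together with the invertibility of $2$ in $\Z_n$. The one place that deserves care is to keep track of ordered versus unordered pairs in the counting argument, and to point out explicitly that the hypothesis that $n$ is odd is exactly what guarantees the uniform count $\tfrac{n-1}{2}$ of edges per color; if $n$ were even, the color $s=0$ (or more generally any $s$ in the image of $i \mapsto 2i$ that is not hit by this doubling map in the right way) would behave differently, breaking uniformity.
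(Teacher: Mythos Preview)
Your proof is correct and follows essentially the same approach as the paper: both arguments dispatch condition (1) by cancelling $i$ in $i+j=i+k$, and for condition (2) both use that the doubling map $i\mapsto 2i$ on $\Z_n$ is a bijection (you phrase this as $2$ being a unit since $\gcd(2,n)=1$, the paper as injectivity of the homomorphism $f(i)=2i$) to conclude that exactly one $i$ satisfies $i=s-i$, leaving $n-1$ ordered pairs and hence $\frac{n-1}{2}$ edges of each color.
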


\proof Let $i, j, k \in \Z_n$ be distinct. Then if $c (\{i, j\})  = c (\{i, k\})$, then $i + j = i + k$. Hence $j = k$.  This shows that no two  edges incident on the same vertex  have the same color.

Consider the group homomorphism $f : \Z_n \to \Z_n$ defined by $f(i) = i + i = 2i$. Since $n$ is odd, $f$ is injective. For, if $i \in \Z_n$ with $2i  = {0}$, then the order of $i$ is either 1 or 2 and divides the odd number $n$. Hence $i = {0}$ and $f$ is a group isomorphism.

For $m \in \Z_n$, we denote the set of all edges with color $m$ by $A_m$. Equivalently,  \[A_m  = \{ \{i, j\}  \, : \,i, j \in \Z_n,  m = i + j,\,  i \neq j\}. \] 

  We will prove that $|A_m| = \frac{n-1}{2}$.  Since $f$ is a bijection, there is a unique $l \in \Z_n$ such that $2 l = m$. Hence 
   $A_m  = \{ \{i, m-i\}  \, : \, i \in \Z_n,\, i \neq l\}$. Note that for each $i \in \Z_n$, we have  $\{ i, m-i\} = \{m-i, i \} $. Therefore,  $|A_m| = \frac{n-1}{2}$. In other words, the number of edges with color $m$ is constant for all $m$. This proves that the edge-colored graph $G_n$ is uniform. \hfill $\square$

\subsection{Color Permuting Automorphism Group of $G_n$}
In this section, we study the structure of the color permuting automorphism group $\CPA(G_n)$ of the edge-colored graph $G_n$ and  prove Theorem \ref{main1}.

\begin{defn}{\label{special}}
{\rm We call a bijection $\tau: \Z_n \to \Z_n$ {\em special} if for all $a, b, c, d \in \Z_n$ with $a \neq b, c \neq d$, and $a+ b = c+ d$, we have  $\tau(a) + \tau(b) = \tau(c) + \tau(d)$.}
\end{defn}

We first observe the following.

\begin{prop}\label{equivalent}
{\rm The following statements are equivalent for a bijection $\tau: \Z_n \to \Z_n$.
\begin{enumerate}
\item $\tau$ is special. 

\item For all $a, b, c, d \in \Z_n$ with  $c \neq d$, and $a+ b = c+ d$, we have  $\tau(a) + \tau(b) = \tau(c) + \tau(d)$.

\item For all $a, b, c, d \in \Z_n$ with $a- c = d-b$, we have  $\tau(a) - \tau(c) = \tau(d) - \tau(b)$.

\item $\tau \in \CPA(G_n)$. 
\end{enumerate}

}
\end{prop}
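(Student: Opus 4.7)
The plan is to exploit the fact that the four conditions come with a natural ordering by strictness of their hypotheses: (3) imposes no inequality at all, (2) imposes $c\neq d$, and (1) imposes both $a\neq b$ and $c\neq d$. So the implications (3) $\Rightarrow$ (2) $\Rightarrow$ (1) are automatic from the definitions, and all the work is in reversing these implications and bridging to (4).

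I would begin by reformulating condition (4) as (1). Unwinding the definition of $\CPA(G_n)$, and using that $G_n$ has complete underlying graph (so every bijection of vertices automatically respects $E$), $\tau\in\CPA(G_n)$ iff there is a bijection $\phi$ of $\C=\Z_n$ with $\phi(i+j)=\tau(i)+\tau(j)$ for all $i\neq j$. The existence of such a $\phi$ as a function is exactly the statement that the right-hand side depends only on $i+j$ (among admissible pairs), i.e., condition (1). Conversely, assuming (1), define $\phi(m):=\tau(i)+\tau(j)$ for any choice of $i\neq j$ with $i+j=m$; this is well defined on all of $\Z_n$ (every $m$ has such a representation since $n\geq 3$), and surjectivity, hence bijectivity, follows because $\tau$ is a bijection.

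For (1) $\Rightarrow$ (2) I would use the $\phi$ just constructed together with a vertex-incidence argument. The colors of the $n-1$ edges incident to the vertex $a$ are exactly $\{a+i:i\neq a\}=\Z_n\setminus\{2a\}$ (these $n-1$ values are distinct because $n$ is odd, so $2$ is invertible mod $n$). Since $\tau$ permutes the vertices, it sends the edges at $a$ bijectively to the edges at $\tau(a)$, whose colors form $\Z_n\setminus\{2\tau(a)\}$. Because $\phi$ realizes this correspondence on colors, $\phi(\Z_n\setminus\{2a\})=\Z_n\setminus\{2\tau(a)\}$, and a bijection of $\Z_n$ matching these cofinite sets must send the missing point to the missing point, yielding the identity $\phi(2a)=2\tau(a)$. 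Now for (2): given $c\neq d$ with $a+b=c+d=:m$, we have $\tau(c)+\tau(d)=\phi(m)$ by definition of $\phi$, and $\tau(a)+\tau(b)=\phi(m)$ either by definition (when $a\neq b$) or by the identity $\phi(2a)=2\tau(a)$ (when $a=b$, so $m=2a$).

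Finally (2) $\Rightarrow$ (3) is a short case analysis: rewrite $a-c=d-b$ as $a+b=c+d$; if $c\neq d$, apply (2) directly and rearrange. If $c=d$, then $a+b=2c$; when $a\neq b$, apply (2) with the two pairs interchanged to get $2\tau(c)=\tau(a)+\tau(b)$, and when $a=b$ the invertibility of $2$ in $\Z_n$ forces $a=c$, making the conclusion trivial. The implication (3) $\Rightarrow$ (1) is immediate. The main obstacle is the (1) $\Rightarrow$ (2) step: although it looks like a tiny "diagonal" upgrade, condition (1) is genuinely silent about pairs with $a=b$, and the extra information must come from a global argument. The cleanest route I see is the edge-incidence observation, which isolates the unique missing color $2a$ at each vertex and forces $\phi$ to respect it.
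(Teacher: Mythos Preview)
Your proof is correct and follows essentially the same strategy as the paper: the easy implications are noted, (1)$\Leftrightarrow$(4) is obtained by building the color permutation $\phi$, and the substantive step (1)$\Rightarrow$(2) is handled by a ``missing element'' argument. The only difference is packaging: the paper argues directly with $\tau$ (pairing each $x\neq a$ with $2a-x$ and using that $\tau$ is a bijection to force $2\tau(a)=\tau(c)+\tau(d)$), whereas you first construct $\phi$ and recast the same complement argument as the observation that $\phi$ must send the unique color $2a$ absent at vertex $a$ to the unique color $2\tau(a)$ absent at vertex $\tau(a)$.
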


\proof Assume (1).  Let $a, c, d \in \Z_n$ and assume that $a+a = c+d$. Let $l = \tau(c) + \tau(d)$ We will prove that $\tau(a) + \tau(a) = l$. 
 Let $B = \mathbb Z_n \setminus \{a\}$. If $x \in B$, then $x \neq 2a - x $ as $n$ is odd, and $x + (2a - x) = c + d$. By our assumption (1), we have $\tau(x) + \tau(2a - x) = \tau(c) + \tau(d) = l.$ From this, we can conclude that $\tau(B) = \{ l - \tau(2a - x) \, :\, x \neq a\} = \Z_n \setminus \{l - \tau(a)\}$.  Since $\tau$ is a bijection, this implies that $\tau(a) = l - \tau(a)$ and hence $\tau(a) + \tau(a) = l$ which proves (2).
 
Assume (2). Then the statement (3) is clear for elements $a, b, c, d \in\Z_n$ with $c \neq d$.  The case $a \neq b$ follows similarly. 
If  $a=b$ and $c = d$ and  $a- c = d- b$, then $a= b= c=d$ as $n$ is odd.  Hence  $\tau(a) - \tau(c) = \tau(c) - \tau(a) = 0$.  This proves (3).

It is clear that (3) $\implies$ (1). 

Assume (1). We define $\phi: \Z_n \to \Z_n$ as $\phi(m) = \sigma(i) + \sigma(j)$ where $m = i + j$ and $i \neq j$. We note that $\phi$ is well-defined function because $\tau$ is special. 
Let $l \in \Z_n$. We write $l = {a} + {b}$ where ${a} \neq {b}$. Then $\phi(\tau^{-1}({a}) + \tau^{-1}({b})) = {a} + {b} = l$. Hence $\phi$ is surjective and hence it is a bijection. Also the color of an edge $\{\tau(i) ,\tau(j)\}$ is the same as $\phi(i + j)$, i.e. $\phi$(color of the edge $\{i, j\})$. This proves that $\tau \in \CPA(G_n)$. Hence (1) $\implies$ (4). 

Suppose now $\tau \in \CPA(G_n)$. Then there exists a permutation $\phi$ of $\Z_n$ such that the color of the edge $\{\tau(a), \tau(b)\}$ is the same as $\phi$(color of the edge $\{a, b\}$). Equivalently, $\tau(a) + \tau(b) = \phi (a + b)$. In particular,  if $a+ b = c + d$, then $\tau(a) + \tau(b) = \tau(c) + \tau(d)$. Hence $\tau$ is special. Hence (4) $\implies$ (1).  \hfill$\square$

\vspace{0.5cm}

Next we define a notion of an affine bijection on an abelian group.

\begin{defn}\label{affine}
{\rm   Let $(A, +)$ denote an abelian group. A bijection $f : A \to A$ is called {\em affine} if there exists a group automorphism $\gamma_f$ of $A$ such that  \[f(a + x) = \gamma_f(a)+ f(x)\] for all $a, x \in A$. We denote the set of all affine bijections on $A$ by $\Aff(A)$.}
\end{defn}

It is not difficult to check that $\Aff(A)$ is a group under composition. 

\begin{prop}\label{specialaff}
{\rm Let $f: \Z_n \to \Z_n$ be a bijection. Then the following statements are equivalent.
\begin{enumerate}
\item $f \in \CPA(G_n)$.

\item $f$ is special. 

\item $f \in \Aff(\Z_n)$.

\end{enumerate} }
\end{prop}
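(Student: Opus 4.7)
Proof plan. Proposition \ref{equivalent} already gives the equivalence $(1) \Leftrightarrow (2)$, so the entire content of Proposition \ref{specialaff} reduces to proving $(2) \Leftrightarrow (3)$. I would handle the two implications separately, and I expect both to be short once the correct candidate for $\gamma_f$ is identified.

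For the easy direction $(3) \Rightarrow (2)$: if $f$ is affine with associated automorphism $\gamma_f$, then setting $x = 0$ in the defining relation yields $f(a) = \gamma_f(a) + f(0)$ for every $a$. Consequently, for any $a, b, c, d \in \Z_n$ with $a+b = c+d$,
\[
f(a) + f(b) = \gamma_f(a) + \gamma_f(b) + 2 f(0) = \gamma_f(a+b) + 2 f(0) = \gamma_f(c+d) + 2 f(0) = f(c) + f(d),
\]
so $f$ is special. (No distinctness condition is needed, so we may ignore the $a\neq b$ clause entirely.)

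For the main direction $(2) \Rightarrow (3)$: assume $f$ is special, and define $\gamma_f : \Z_n \to \Z_n$ by $\gamma_f(x) = f(x) - f(0)$. Since $f$ is a bijection, so is $\gamma_f$. The key step is to show $\gamma_f$ is a group homomorphism. For this I would invoke the third characterization in Proposition \ref{equivalent}, which does not require any distinctness hypothesis: applied with $a = x+y$, $b = 0$, $c = x$, $d = y$ (so that $a - c = y = d - b$), it gives
\[
f(x+y) - f(x) = f(y) - f(0),
\]
that is, $f(x+y) + f(0) = f(x) + f(y)$, or equivalently $\gamma_f(x+y) = \gamma_f(x) + \gamma_f(y)$. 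Hence $\gamma_f$ is a bijective homomorphism, i.e.\ an automorphism of $\Z_n$. The affine identity is then immediate: $f(a+x) = \gamma_f(a+x) + f(0) = \gamma_f(a) + \gamma_f(x) + f(0) = \gamma_f(a) + f(x)$, so $f \in \Aff(\Z_n)$.

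The only potential obstacle I see is ensuring that I invoke the right form of the special property, since the definition in \ref{special} carries the distinctness restriction $a \neq b$, $c \neq d$ — which would fail exactly in the case $b = 0 = y$ used above. This is precisely why I would work through the unconstrained reformulation in Proposition \ref{equivalent}(3) rather than directly from Definition \ref{special}. Everything else is a routine bookkeeping of the translation between $f$ and $\gamma_f$.
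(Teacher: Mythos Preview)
Your proposal is correct and follows essentially the same route as the paper: both arguments cite Proposition~\ref{equivalent} for $(1)\Leftrightarrow(2)$, define $\gamma_f(x)=f(x)-f(0)$ for $(2)\Rightarrow(3)$, and exploit the unconstrained difference form of the special property to obtain the additivity identity $f(x+y)-f(x)=f(y)-f(0)$. The only cosmetic differences are that the paper verifies injectivity of $\gamma_f$ explicitly rather than observing it is a translate of a bijection, and for $(3)\Rightarrow(2)$ the paper works with the difference characterization $f(i)-f(j)=\gamma_f(i-j)$ whereas you use the sum characterization $f(a)+f(b)=\gamma_f(a+b)+2f(0)$; these are interchangeable one-line computations.
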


\proof  (1) $\iff$ (2) by Proposition \ref{equivalent}.

Assume that $f$ is special. We define $\gamma_f: \Z_n \to \Z_n$ by $\gamma_f({a}) = f({a}) - f({0})$ for all ${a} \in \Z_n$.
First we prove that $\gamma_f$ is a group homomorphism.  Let $i, j \in \Z_n$. As $f$ is special, by Proposition \ref{equivalent}, for all $i, j \in \Z_n$,  we have $f(i+j) - f(j) = f(i) - f ({0}).$ 

Hence for all $i, j \in \Z_n$, 
\begin{align*}
\gamma_f(i+j) &= f(i + j) - f ({0})\\
 &= f(i + j) - f(j) + f(j) - f({0})\\
 &= f(i) - f({0}) + f (j) - f ({0})\\
 &=\gamma_f(i) + \gamma_f(j).
\end{align*}

Suppose that $\gamma_f(i) = {0}$. This implies that $f(i) - f({0}) = {0}$ $\implies$ $f(i) = f({0})$. As $f$ is one-to-one, $i = {0}$. Hence $\ker \gamma_f = \{{0}\}$. This proves that  $\gamma_f$ is a bijection and hence a group automorphism of $\Z_n$. 

Also for ${a}, {x} \in \Z_n$, we have $f({a} + {x}) - f({x}) = f({a}) - f({0})$ by Proposition \ref{equivalent}. Hence $f({a} + {x}) - f({x}) =\gamma_f({a})$ and $f({a} + {x})  =\gamma_f({a}) +  f({x})$. This proves that $f \in \Aff(\Z_n)$ and (2) $\implies$ (3). 

We will prove that (3) $\implies$ (2). For, we assume that $f \in \Aff(\Z_n)$ and let $\gamma_f \in \Aut(\Z_n)$ such that $f({a} + {x})  =\gamma_f({a}) +  f({x})$ for all ${a}, {x} \in \Z_n$. Let $i, j, k, l \in \Z_n$ with $i -j = k - l$. We will prove that $f(i) - f(j) = f(k) - f(l)$.  
\begin{align*}
f(i) - f(j) &= f((i - j) + j) - f(j) \\
 &= \gamma_f(i - j) \\
 &= \gamma_f(k - l)\\\
 &= f((k - l) + l) - f(l)\\
 &= f(k) - f(l).
\end{align*}
By Proposition \ref{equivalent}, $f$ is special. \hfill$\square$

\begin{prop}\label{propaff}
{\rm   $\Aff(\Z_n) \cong \Z_n \rtimes \Aut(\Z_n)$.}
\end{prop}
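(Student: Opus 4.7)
The plan is to build an explicit isomorphism, using the observation that the defining condition $f(a+x) = \gamma_f(a) + f(x)$, specialised to $x = 0$, forces every affine bijection to take the form
\[
f(a) = \gamma_f(a) + f(0),
\]
i.e.\ an automorphism of $\Z_n$ followed by a translation. This suggests the canonical map to the holomorph.

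First I would define $\Phi : \Z_n \rtimes \Aut(\Z_n) \to \Aff(\Z_n)$ by
\[
\Phi(b,\gamma)(a) = \gamma(a) + b,
\]
and check that $\Phi(b,\gamma)$ really lies in $\Aff(\Z_n)$; this is immediate because $\Phi(b,\gamma)(a+x) = \gamma(a+x) + b = \gamma(a) + (\gamma(x) + b) = \gamma(a) + \Phi(b,\gamma)(x)$, so $\gamma$ itself serves as the required automorphism.

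Next I would verify that $\Phi$ is a group homomorphism. Using the standard multiplication $(b_1,\gamma_1)(b_2,\gamma_2) = (b_1 + \gamma_1(b_2),\, \gamma_1\gamma_2)$ in the semidirect product, a direct calculation gives
\[
\Phi(b_1,\gamma_1)\circ\Phi(b_2,\gamma_2)(a) = \gamma_1(\gamma_2(a)+b_2) + b_1 = \gamma_1\gamma_2(a) + \bigl(b_1+\gamma_1(b_2)\bigr),
\]
which matches $\Phi\bigl((b_1,\gamma_1)(b_2,\gamma_2)\bigr)(a)$. Injectivity follows by evaluating $\Phi(b,\gamma) = \id$ at $a=0$ to get $b=0$ and then reading off $\gamma = \id$.

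The one step that takes slightly more than bookkeeping is surjectivity, which is exactly where Definition \ref{affine} is used. Given $f \in \Aff(\Z_n)$ with associated $\gamma_f \in \Aut(\Z_n)$, I would set $x = 0$ in $f(a+x) = \gamma_f(a) + f(x)$ to obtain $f(a) = \gamma_f(a) + f(0)$; hence $f = \Phi(f(0),\gamma_f)$. I expect this to be the main (minor) obstacle, since it is the only place where the abstract definition of affineness is cashed in for the explicit normal form. With surjectivity established, $\Phi$ is the desired isomorphism.
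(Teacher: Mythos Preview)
Your argument is correct. The paper takes a slightly different (dual) route: rather than building an explicit isomorphism $\Phi:\Z_n\rtimes\Aut(\Z_n)\to\Aff(\Z_n)$, it defines the projection $\phi:\Aff(\Z_n)\to\Aut(\Z_n)$, $f\mapsto\gamma_f$, checks that this is a surjective group homomorphism with kernel the group of translations $\{T_a:a\in\Z_n\}\cong\Z_n$, and then observes that the inclusion $\iota:\Aut(\Z_n)\hookrightarrow\Aff(\Z_n)$ splits the resulting short exact sequence, whence $\Aff(\Z_n)\cong\Z_n\rtimes\Aut(\Z_n)$. Your direct construction is a bit more hands-on and avoids invoking the recognition theorem for split extensions; the paper's approach makes the normal subgroup of translations and the splitting structurally visible without writing down the semidirect-product multiplication. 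Both arguments ultimately rest on the same normal form $f(a)=\gamma_f(a)+f(0)$ obtained by setting $x=0$ in Definition~\ref{affine}; the paper uses it to identify $\ker\phi$, while you use it for surjectivity.
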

\proof  We define $\phi: \Aff(\Z_n) \to \Aut(\Z_n)$ by $\phi(f) = \gamma_f$ where $\gamma_f \in \Aut(\Z_n)$ such that $f({a} + {x}) = \gamma_f({a}) + f({x})$ for all ${a}, {x} \in \Z_n$. We note that if $f \in \Aff(\Z_n)$, then for all ${x} \in \Z_n$, $\gamma_f({x}) = f({x}) - f({0})$. We first prove that $\phi$ is a group homomorphism. Let $f, g \in \Aff(\Z_n)$. 
We need to prove $\gamma_{f\circ g} = \gamma_f \circ \gamma_g$. Let ${a} \in \Z_n$. 
Then \begin{align*}
\gamma_f(\gamma_g({a})) &= \gamma_f(g({a}) - g({0}))\\
&=\gamma_f(g({a})) - \gamma_f(g({0})) \,\,\text{as } \gamma_f \in \Aut(\Z_n)\\
&= f(g({a})) -f({0}) - (f(g({0})) - f({0}))\\
&= f\circ g({a}) - f \circ g({0})\\
&= \gamma_{f \circ g}({a}).
\end{align*}
This shows that $\phi(f \circ g) = \phi(f) \circ \phi(g)$ and hence $\phi$ is a group homomorphism.

It is clear that $\ker \phi = \{f \in \Aff(\Z_n) \, : \, f({x}) = {x} + f({0}) \text{ for all } {x} \in \Z_n \}$.  Equivalently,  $\ker \phi = \{T_{{a}} \,:\, {a} \in \Z_n\}$ where $T_{{a}}: \Z_n \to \Z_n$ is a translation by ${a}$ given by $T_{{a}}({x}) = {x} + {a}$.  For, if $f \in \ker \phi$, then $f = T_{f({0})}$. Also given ${a} \in \Z_n$, we have $T_{{a}} \in \Aff(\Z_n)$ as $T_{{a}}({x} + {y}) = {x} + {y} + {a} = {x} + T_{{a}}({y})$ and hence $T_{{a}} \in \ker \phi$.  We note that $\ker \phi \cong \Z_n$. 

If $\gamma \in \Aut(\Z_n)$, then $\gamma({a} + {x}) = \gamma({a}) + \gamma({x})$ for all ${a}, {x} \in \Z_n$ $\implies$ $\gamma \in \Aff(\Z_n)$ and $\phi(\gamma) = \gamma$. Hence $\phi$ is surjective and $\phi \circ \iota = \id$ where $\iota: \Aut(\Z_n) \to \Aff(\Z_n)$ is the inclusion and $\id: \Aut(\Z_n) \to \Aut(\Z_n)$ is the identity map. This means that the following exact sequence splits.
\[0 \longrightarrow \Z_n \longrightarrow \Aff(\Z_n) \xrightarrow[]{\text{  } \phi \text{  }} \Aut(\Z_n) \longrightarrow 1\]

This proves that $\Aff(\Z_n) \cong \Z_n \rtimes \Aut(\Z_n)$. \hfill$\square$


\begin{proof}[Proof of Theorem \ref{main1}] By Proposition \ref{specialaff}, $\CPA(G_n) = \Aff(\Z_n)$ and by Proposition \ref{propaff},  $\Aff(\Z_n)$ is isomorphic to $\Z_n \rtimes \Aut(\Z_n)$. 
\end{proof}

\begin{remark}\label{Stab}
{\rm We note that $\CPA(G_n)$ acts transitively on the set of vertices $\Z_n$ as all rotations are the color permuting automorphisms of $G_n$.
 The stabilizer of ${0}$ under this action is precisely the automorphisms group of $\Z_n$, $\Aut(\Z_n)$. For if $f \in \CPA(G_n)$ and $f({0}) = {0}$, then  there exists $\gamma_f \in \Aut(\Z_n)$ such that $f( {a} + {x}) = \gamma_f({a}) + f({x})$ for all ${a}, {x} \in \Z_n$. Then for ${a} \in \Z_n$, we have $f({a}) = f({a} + {0}) = \gamma_f({a}) + f({0}) = \gamma_f({a})$. This shows that $f = \gamma_f \in \Aut(\Z_n)$.}
 \end{remark}

\section{2-step nilpotent Lie Algebras}

\subsection{Associating a Lie algebra with a graph}\label{sec-graph}

In this section we recall the construction of a 2-step nilpotent Lie algebra associated with an edge-colored {\em directed } graph (see \cite{R} and also \cite{PS}). 
Consider an edge-colored  directed simple graph  $H=(S,E,c: E \rightarrow \mathscr{C})$ where $S$ is the set of vertices, $E$ is the set of directed edges, and $c$ is a surjective edge-coloring function from the set of (directed) edges to the set of colors $\mathscr{C}$. We will denote a directed edge from $\alpha$ to $\beta$ by an ordered pair $(\alpha, \beta)$. By abuse of notation, we will  denote the color of the directed edge $(\alpha, \beta) \in E$, by simply $c(\alpha, \beta)$  rather than the more accurate $c\left((\alpha, \beta)\right)$. 

We  associate with $H$ a 2-step nilpotent Lie algebra $\N_H$ over $\mathbb R$ in the following way.
  The underlying vector space of $\N_H$ is $V \oplus W$ where $V$ is the $\mathbb R$-vector space consisting of formal $\mathbb R$-linear combinations of elements of $S$ (so that $S$ is a basis of $V$), and $W$ is the $\mathbb R$-vector space consisting of formal $\mathbb R$-linear combinations of elements of $\mathscr{C}$. The Lie bracket structure on $\mathcal{N}_H$ is given by the following 
\begin{enumerate}
\item If  $(\alpha, \beta) \in E$ and  $c(\alpha,  \beta) = Z $, then $[\alpha,\beta]=-[\beta, \alpha]=Z.$
\item If  $(\alpha, \beta) \notin E$, then $[\alpha, \beta] = [\beta, \alpha] = 0.$
\item $[Y,Z]=0$ for all $Y \in \N_H$ and $Z \in W$.
 
\end{enumerate}

We say that $\mathcal{N}_H$ is the {\em 2-step nilpotent Lie algebra} associated with the graph $H$. Note that the derived Lie algebra $[\N_H, \N_H]$ is the span of $\C$ and the dimension of $\N_H$ is $|S| + |\C|$. 

The above  construction is a generalization of the construction of 2-step nilpotent Lie algebras associated with simple graphs as in \cite{DM, M} where the  edge-coloring  $c$ is a bijection.

\begin{ex}\label{examplenilpotent}
{\rm Consider the following directed edge-colored graph $H=(S,E,c: E \rightarrow \mathscr{C})$, where $S= \{\alpha, \beta, \gamma, \delta\}$, $E= \{(\alpha, \beta), (\beta, \gamma), (\gamma, \delta), (\delta, \alpha)\}$, $\C = \{Z_1, Z_2\}$ and edge-coloring $c : E \to \C$ is given by
\[ c(\alpha, \beta) = c(\gamma, \delta) = Z_1, \]
\[c(\beta, \gamma) = c(\alpha, \delta) = Z_2.\]

\begin{figure}[h!]
\begin{tikzpicture}[->,>=stealth',shorten >=1pt,auto,
  thick,vertex/.style={circle,draw,fill,scale=.35,font=\sffamily\large\bfseries},node distance=2in,thick]

\node[vertex, label=left:{$\alpha$}](X1) {};
  \node[vertex, label=right:{$\beta$}](X2) [right of=X1] {};
  \node[vertex, label=right:{$\gamma$}] (X3) [below of=X2]  {};
  \node[vertex, label=left:{$\delta$}] (X5)[below  of=X1]  {};

  \path[every node/.style={font=\sffamily\small}]
    (X1) edge ["$Z_1$",blue] (X2)
		(X2)     edge ["$Z_2$", red]  (X3)
			(X3)		edge ["$Z_1$", blue]  (X5)
	(X1) edge[red] node[left, red] {$Z_2$}   (X5)
				;
\end{tikzpicture}
\end{figure}
                              
Then the associated 2-step nilpotent Lie algebra $\N_H$ is of dimension 8. The only non-zero Lie brackets among the basis vectors of $\N_H$ are given by

\[[\alpha, \beta] = [\gamma, \delta] = Z_1 = -[\beta, \alpha] = -[\delta, \gamma], \]
\[[\beta, \gamma] = [\alpha, \delta] = Z_2 = -[\gamma, \beta] = - [\delta, \alpha]. \]

}
\end{ex}

\subsection{Automorphism group}
Recall that a  linear isomorphism $\tau: \mathcal{N} \to \mathcal{N}$ is called a {\em Lie automorphism} of the Lie algebra $\mathcal{N}$ if for all $X, Y \in \mathcal{N},$ \[\tau[X, Y] = [\tau(X), \tau(Y)].\]   The group of all automorphisms of the Lie algebra $\mathcal{N}$ 
 is called the {\em automorphism group of $\mathcal{N}$ } and is denoted by $\Aut(\mathcal{N})$.


Given an edge-colored directed graph $H = (S, E, c: E \to \mathscr{C})$, we will characterize those  graph automorphisms of the simple graph $(S, E)$ which can be extended to  Lie automorphisms of the associated 2-step nilpotent Lie algebra $\N_H$.

Let $H = (S, E, c: E \to \mathscr{C})$ be an edge-colored simple directed graph. Let $E_u$ be the collection of associated undirected edges;
\[ E_u =\{\{\alpha, \beta \}\,\,:\,\, (\alpha, \beta) \in E \}.\] 
We color the undirected edges by the same colors, i.e. we use the coloring $c_u : E_u \to \C$ given by \[c_u(\{\alpha, \beta\}) = c(\alpha, \beta) \text{ if } (\alpha, \beta) \in E.  \]

The edge-colored undirected simple graph $H_u = (S, E_u, c_u : E_u \to \C)$ will called the {\em underlying undirected graph} of $H$. 

Let $E^{-}$ denote the set $\{(\beta, \alpha) :  (\alpha, \beta) \in E\}$ and  $\mathscr{C}^{-}$ denote the set $\{-Z \in W : Z \in \mathscr{C}\}$. We extend the edge-coloring function $c$ on $E \cup E^{-}$ as follows:  If $c(\alpha, \beta) = Z$, then we define  $c(\beta, \alpha)$ by $-Z$.

\begin{defn}\label{GLA}
  {\rm Let $H = (S, E, c: E \to \mathscr{C})$ be an edge-colored simple directed graph. We say that a color permuting automorphism $\chi$ of the underlying undirected graph $H_u= (S, E_u, c_u: E_u \to \mathscr{C})$ is a {\em graph Lie automorphism} of $H$ if  it induces a permutation on $\mathscr{C} \cup \mathscr{C}^{-}$, i.e.,  if there exists a permutation $\phi$ of $\mathscr{C} \cup \mathscr{C}^{-}$ such that $c(\chi(\alpha), \chi(\beta)) = \phi (c(\alpha, \beta))$ for all $(\alpha, \beta) \in E \cup E^{-}$.  We denote the group of all such automorphisms by $\GLA(H)$.}
\end{defn}

\begin{ex}\label{exampleGLA}
{\rm Consider the following directed edge-colored graph $H$  and the associated 2-step nilpotent Lie algebra $\N_H$ as in Example \ref{examplenilpotent}. 
\begin{figure}[h!]
\begin{tikzpicture}[->,>=stealth',shorten >=1pt,auto,
  thick,vertex/.style={circle,draw,fill,scale=.35,font=\sffamily\large\bfseries},node distance=2in,thick]

\node[vertex, label=left:{$\alpha$}](X1) {};
  \node[vertex, label=right:{$\beta$}](X2) [right of=X1] {};
  \node[vertex, label=right:{$\gamma$}] (X3) [below of=X2]  {};
  \node[vertex, label=left:{$\delta$}] (X5)[below  of=X1]  {};

  \path[every node/.style={font=\sffamily\small}]
    (X1) edge ["$Z_1$",blue] (X2)
		(X2)     edge ["$Z_2$", red]  (X3)
			(X3)		edge ["$Z_1$", blue]  (X5)
	(X1) edge[red] node[left, red] {$Z_2$}   (X5)
				;
\end{tikzpicture}
\end{figure}
The color permuting automorphism group of the  underlying undirected graph $H_u$, $\CPA(H_u) \cong D_8$ (see Example \ref{4uniform}). 
If $\chi = (\alpha \,\beta) (\gamma\, \delta)$, then we can see that $\chi \in \CPA(H_u)$. Note that $\C \cup \C^{-} = \{Z_1, Z_2, -Z_1, -Z_2\}$. We define a permutation $\phi$ of $\C \cup \C^{-}$ as follows:
\[\phi(Z_1) = -Z_1,\, \phi(Z_2) = Z_2,\, \phi(-Z_1) = Z_1, \,\phi(-Z_2) = -Z_2.\]
Then $c(\chi(\alpha), \chi(\beta)) = c(\beta, \alpha) = -Z_1 = \phi(Z_1) = \phi(c(\alpha, \beta))$.  Similarly one can check that $c(\chi(x), \chi(y)) = \phi(c(x, y))$ for all $(x, y) \in E \cup E^{-}$. Hence $\chi \in \GLA(H)$. 

Now if $\sigma  = (\alpha \,\beta\, \gamma\, \delta)$, then $\sigma \in \CPA(H_u)$. Note that $\sigma \notin \GLA(H)$. This is because $c(\sigma(\alpha), \sigma(\beta)) = c(\beta, \gamma) = Z_2$ and $c(\sigma(\gamma), \sigma(\delta)) = c(\delta, \alpha) = -Z_2$. However $c(\alpha, \beta) = c(\gamma, \delta) = Z_1$ and hence there is no permutation $\psi$ of $\C \cup \C^{-}$ such that $c(\sigma(\alpha), \sigma(\beta)) = \psi(c(\alpha, \beta))$. 
}
\end{ex}

We now show that the elements of $\GLA(H)$ give rise to automorphisms of the associated Lie algebra $\N_H$.

\begin{lemma}\label{GLAextension}

{\rm Let $H = (S, E, c: E \to \mathscr{C})$ be an edge-colored simple directed graph.  If  $\chi \in \GLA(H)$, then $\chi$ can be uniquely extended to a Lie  automorphism of $\N_H$. Therefore, the group $\GLA(H)$ can be realized as a subgroup of $\Aut(\N_H)$.}
\end{lemma}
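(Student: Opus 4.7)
The plan is to extend $\chi$ from a combinatorial symmetry of $H$ to a linear automorphism $\tilde\chi$ of $\N_H = V \oplus W$ and then verify that it preserves the Lie bracket. Since $S$ is a basis of $V$ and $\C$ is a basis of $W$, I would set $\tilde\chi(\alpha) = \chi(\alpha)$ for $\alpha \in S$ and $\tilde\chi(Z) = \phi(Z)$ for $Z \in \C$, where $\phi$ is the permutation of $\C \cup \C^{-}$ supplied by the definition of $\GLA(H)$; because $\phi(Z) \in \C \cup \C^{-} \subset W$, this makes sense and $\tilde\chi$ preserves the decomposition $V \oplus W$.

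The key preliminary identity is the sign compatibility $\phi(-Z) = -\phi(Z)$ for every $Z \in \C$. To verify it, I would pick $(\alpha, \beta) \in E$ with $c(\alpha, \beta) = Z$, so that $(\beta, \alpha) \in E^{-}$ with $c(\beta, \alpha) = -Z$, and apply the defining relation $c(\chi(x), \chi(y)) = \phi(c(x, y))$ to both pairs: this gives $\phi(Z) = c(\chi(\alpha), \chi(\beta))$ and $\phi(-Z) = c(\chi(\beta), \chi(\alpha)) = -c(\chi(\alpha), \chi(\beta)) = -\phi(Z)$. As a consequence, in the basis $\C$ the matrix of $\tilde\chi|_W$ is a signed permutation matrix, hence invertible; together with the obvious bijectivity of $\tilde\chi|_V$, this shows that $\tilde\chi$ is a linear isomorphism of $\N_H$.

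Next I would check $\tilde\chi[X, Y] = [\tilde\chi X, \tilde\chi Y]$ on basis pairs, which suffices by bilinearity. Pairs from $V \times W$ or $W \times W$ are automatic: both sides vanish because $\tilde\chi$ preserves the decomposition and $W$ is central. For $\alpha, \beta \in S$, since $\chi$ is a color permuting automorphism of the underlying undirected graph $H_u$, one has $\{\alpha, \beta\} \in E_u$ iff $\{\chi(\alpha), \chi(\beta)\} \in E_u$; if not, both brackets vanish. Otherwise, assuming (without loss of generality) $(\alpha, \beta) \in E$ with color $Z$, one computes $\tilde\chi[\alpha, \beta] = \phi(Z) = c(\chi(\alpha), \chi(\beta)) = [\chi(\alpha), \chi(\beta)]$, where the compatibility $\phi(-Z) = -\phi(Z)$ handles the subcase in which $\chi$ reverses the edge direction, i.e.\ $(\chi(\beta), \chi(\alpha)) \in E$ rather than $(\chi(\alpha), \chi(\beta)) \in E$.

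Uniqueness is immediate because $\N_H$ is generated as a Lie algebra by $V$: the specified action on $V$ forces $\tilde\chi(Z) = [\chi(\alpha), \chi(\beta)]$ whenever $Z = [\alpha, \beta]$, and surjectivity of $c$ ensures every element of the basis $\C$ of $W$ is realized as such a bracket. The assignment $\chi \mapsto \tilde\chi$ is then a group homomorphism (both $\widetilde{\chi_1 \chi_2}$ and $\tilde\chi_1 \circ \tilde\chi_2$ restrict to $\chi_1 \circ \chi_2$ on $S$, so uniqueness forces them to agree) and injective (if $\tilde\chi = \id$, then $\chi = \id$ on $S$), yielding the desired embedding $\GLA(H) \hookrightarrow \Aut(\N_H)$. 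I expect the main subtlety to be keeping the two sign conventions straight, namely that $c$ is extended antisymmetrically to $E^{-}$ while the Lie bracket is automatically antisymmetric, so that the identity $\phi(-Z) = -\phi(Z)$ is precisely the hinge that reconciles them.
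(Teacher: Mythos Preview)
Your proof is correct and follows essentially the same approach as the paper: extend $\chi$ linearly on $V$, define it on $W$ via the color data, and verify the bracket relation on basis pairs. Your version is more explicit in places the paper leaves implicit---notably the sign compatibility $\phi(-Z)=-\phi(Z)$, the signed-permutation argument for invertibility on $W$, and the verification that $\chi\mapsto\tilde\chi$ is an injective group homomorphism---but the underlying strategy is identical.
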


\begin{proof}

Note that $\N_H = V \oplus W$ where $V$ is the $\mathbb R$-vector space with $S$ as a basis and $W$ is  the $\mathbb R$-vector space with $\C$ as a basis.  In order to extend $\chi$ to a Lie automorphism of $\N_H$, we first extend $\chi$ linearly on $V$ and then  linearly on $W$ by defining $\chi(Z) = c(\chi(\alpha), \chi(\beta))$ if $Z = c(\alpha, \beta)$. We will denote the extended linear map from $\N_H$ to $\N_H$ by $\chi$ as well. 
Now  $\chi$ is well defined on $\N_H$ because $\chi \in \GLA(H)$ (see Definition \ref{GLA}). It can be seen that $\chi$ is onto and hence it is a linear isomorphism. 

If $(\alpha, \beta) \in E$ and $c(\alpha, \beta) = Z$, then $\chi([\alpha, \beta]) = \chi(Z) =  c(\chi(\alpha), \chi(\beta)) =  [\chi(\alpha),  \chi(\beta)]$ by definition of the Lie bracket on $\N_H$.  As $\chi$ is linear, we have $\chi([X_1, X_2]) = [\chi(X_1), \chi(X_2) ]$ for all $X_1, X_2 \in V$. Recall $[Y, U] = 0$  for all $Y \in \N_H$ and $U \in W$.  Using the linearity of $\chi$ again, we have   \[\chi([X, Y]) = [\chi(X), \chi(Y) ]\] for all $X, Y \in \N_H$ and $\chi \in \Aut(\N_H)$.  The uniqueness of the extension is clear from the definition of $\N_H$.
\end{proof}

\subsection{The directed graph $H_n$}\label{sectionedgedirection}   
Throughout we assume that $n$ is an odd integer. We  define the directed edge-colored graph $H_n$  whose underlying undirected graph is $G_n$ as introduced in Section \ref{G_n}. We define the vertex set of  $H_n$  to be $\Z_n =\{0, 1, \ldots, n-1\}$ and 
 the directed edge set $E$ as follows:  \[\displaystyle E = \left\{( m+i,\,\,  m-i) \,\,:\,\,  0 \leq m \leq n-1,\,\,1\leq i \leq \dfrac{n-1}{2}\right\}.\]  
The set of colors $\C$ is denoted by $\{Z_{i}\,\,:\,\,i \in \Z_n\}$ and the edge-coloring  $c: E \to  \C$ is defined by $c(i, j) = Z_{i+j}$ for all $(i, j) \in E$.

Geometrically the orientation of edges of $H_n$ can be visualized as follows. Note that the underlying undirected graph $(H_n)_u$ is nothing but the graph $G_n$, which can be pictured as a regular $n$-gon in the plane along with all possible diagonals.
 To obtain $H_n$, we orient the $n$ edges of the regular polygon clockwise and then orient the diagonals in such a way that the edges and diagonals which are parallel receive the same orientation.

For example, the directed edge-colored graph $G_5$ is as below.

\begin{figure}[h!]
\begin{tikzpicture}[->,>=stealth',shorten >=1pt,auto,
  thick,vertex/.style={circle,draw,fill,scale=.35,font=\sffamily\large\bfseries},node distance=3in,thick]

\node[vertex, label=above:{${0}$}](X0)  at (90:3) {};
  \node[vertex, label=right:{${1}$}](X1) at (18:3) {};
 \node[vertex, label=right:{${2}$}] (X2)  at (306:3)  {};
 \node[vertex, label=left:{${3}$}] (X3) at (234:3) {};
	 \node[vertex, label=left:{${4}$}] (X4) at (162:3) {};

\path[every node/.style={font=\sffamily\small}]
    (X0) edge ["$Z_{{1}}$",blue] (X1)
					 (X1) edge ["$Z_{{3}}$",red] (X2)
		 (X2) edge ["$Z_{{0}}$",green] (X3)
         (X2) edge ["$Z_{{0}}$",green] (X3)
         (X3) edge ["$Z_{{2}}$"] (X4)
         (X4) edge ["$Z_{{4}}$", orange] (X0)
         (X4) edge ["$Z_{{4}}$", orange] (X0)
        (X4) edge [blue]  node[sloped, below] {$Z_{{1}}$}(X2)
  (X0) edge [red] node[sloped, above] {$Z_{{3}}$} (X3)
	(X1) edge [green] node[sloped, above] {$Z_{{0}}$}  (X4)	
    (X1) edge [green] node[sloped, above] {$Z_{{0}}$} (X4)	
    (X2) edge []  node[sloped, above] {$Z_{{2}}$}(X0)
     (X3) edge [orange]  node[sloped, below] {$Z_{{4}}$}(X1)
     (X3) edge [orange]  node[sloped, below] {$Z_{{4}}$}(X1)
    ;
\end{tikzpicture}
\caption{$H_5$}\label{fig-H5}
\end{figure}

\subsection{Graph Lie automorphism group of $H_n$} 

First, note that $\GLA(H_n)$ acts transitively on the set of vertices $\Z_n$. This is because rotations (or translations) are the graph Lie automorphism. To see this, suppose that  $\chi :\Z_n \to \Z_n$ is given by $\chi(i) = i + k$ for all $i \in \Z_n$. We define a permutation $\phi$   on $\mathscr{C}$ by $\phi(Z_{l}) = Z_{ l+2k}$ for all $l \in \Z_n$. Then if $0 \leq m \leq n-1$ and $1\leq i \leq \frac{n-1}{2}$, we have 

\begin{align*}
c \circ \chi (( m+i,\,\,  m-i) &=c( m+i+2k,\,\,  m-i+2k)\\
&= Z_{ 2m+4k}\\
&= \phi(Z_{ 2m+2k})\\
&= \phi \circ c (( m+i,\,\,  m-i).
\end{align*}
 
We can extend $\phi$ on $\mathscr{C} \cup \mathscr{C}^{-}$ by defining $\phi(-Z_{l}) = -Z_{ l+2k}$.  Hence $\chi \in \GLA(H_n)$. 

Let $\Stab({0})$ denote the stabilizer of ${0}$ under the action of $\GLA(H_n)$ on $\Z_n$, i.e. let \\ $\Stab({0}) = \{ \sigma \in \GLA(H_n) \,\,:\,\, \sigma({0}) = {0}\}.$
Then by Orbit-stabilizer theorem,  we have 
\begin{equation} \label{OSE}
|\GLA(H_n)| = n |\Stab({0})|.
\end{equation}

We will prove that $\Stab({0}) = \{\pm \id\}$. In other words, we will prove that if $\id \neq \chi \in \GLA(H_n)$ and $\chi({0}) = {0}$, then $\chi$ is a reflection, i.e. $\chi(i) =  n-i$ for all $i \in \Z_n$.

 Let  $R = \{{1}, \ldots,  \frac{n-1}{2} \} \subset \Z_n$ and  $L = \{  \frac{n+1}{2}, \ldots,  n-1\} \subset \Z_n$.   Note that in Figure \ref{fig-H5} for $H_5$, $R$ (resp. $L$) consists of the vertices to the right (resp. left) of the vertical line through $0$.  

\begin{lemma}\label{lemma1}
{\rm If $\tau \in \Aut(\Z_n)$ and $\tau \neq \id$, then $\tau(R) \neq R$.  }
\end{lemma}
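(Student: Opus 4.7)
The plan is to reduce the claim to an arithmetic condition by using the standard description $\Aut(\Z_n) = \{\tau_k : \gcd(k,n)=1\}$ with $\tau_k(x) = kx$. Suppose for contradiction that some $\tau_k \neq \id$ satisfies $\tau_k(R) = R$. Since $1 \in R$ and $\tau_k(1) = k$, I immediately obtain $1 \le k \le (n-1)/2$, so it suffices to rule out $k > 1$.

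The main step is to convert the set equality $\tau_k(R) = R$ into a numerical identity in $\Z_n$ by summing both sides. Because $\tau_k$ restricts to a bijection of the finite set $R$ onto itself,
\[
k \sum_{r \in R} r \;\equiv\; \sum_{r \in R} r \pmod{n},
\]
so $(k-1)\Sigma \equiv 0 \pmod{n}$, where $\Sigma = 1 + 2 + \cdots + \tfrac{n-1}{2} = \tfrac{n^2-1}{8}$ (an integer because $n$ is odd, hence one of $n \pm 1$ is divisible by $4$).

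The final step is to observe that $\Sigma$ is a unit in $\Z_n$: from $8\Sigma = n^2-1 \equiv -1 \pmod{n}$ and $\gcd(8,n) = 1$ (since $n$ is odd), $\Sigma$ has multiplicative inverse $-8$ modulo $n$. Cancelling $\Sigma$ in the congruence above forces $k \equiv 1 \pmod{n}$, and combined with $1 \le k \le (n-1)/2 < n$ this gives $k = 1$, contradicting $\tau_k \neq \id$.

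I do not expect a serious obstacle. The only point requiring care is the invertibility of $\Sigma$ mod $n$, but this is automatic once one notices that $8\Sigma \equiv -1 \pmod{n}$. Conceptually, summing over a $\tau_k$-invariant subset converts a combinatorial constraint on $R$ into a single linear congruence in $k$, and the oddness of $n$ guarantees that only the trivial solution $k = 1$ survives.
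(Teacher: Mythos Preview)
Your argument is correct and rather elegant. It differs genuinely from the paper's proof, which is constructive: assuming $\tau(1)=k$ with $2\le k\le \tfrac{n-1}{2}$, the paper applies the division algorithm to $n-1$ and $k$ to exhibit an explicit $q\in R$ with $\tau(q)=qk\in L$. Your route instead converts the set-theoretic constraint $\tau_k(R)=R$ into the single congruence $(k-1)\Sigma\equiv 0\pmod n$ and then observes that $\Sigma=\tfrac{n^2-1}{8}$ is a unit in $\Z_n$ because $8\Sigma\equiv -1\pmod n$. The paper's approach has the advantage of producing a concrete witness $q\in R$ that lands in $L$; yours is shorter, avoids any case analysis or division with remainder, and isolates the underlying reason cleanly: any automorphism fixing a subset whose element-sum is a unit must act trivially on that sum, hence be the identity. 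Either argument is entirely adequate here.
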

\proof Let $\tau({1}) = k$. If $k \in L$, we are done as ${1} \in R$. Now we assume that $k \in R$, i.e. we assume that $2 \leq k \leq \frac{n-1}{2}$. We claim that there exists $q$ with $1 \leq q \leq \frac{n-1}{2}$  such that $\frac{n+1}{2} \leq qk \leq n-1$. 
In $\Z$, we divide $n-1$ by $k$. Let $q \in \mathbb N$ and $r$ with $0 \leq r \leq k-1$ such that $n-1 = q k + r$. 
Then $q = \frac{n-1}k - \frac{r}k \leq \frac{n-1}k \leq \frac{n-1}{2}$ as $k \geq 2$. Hence $q \leq \frac{n-1}{2}$. 
Also $ n-1 \geq n-1-r  > \frac{n-1}{2}$ as $r < \frac{n-1}{2}$.  Hence $ n-1 \geq q k   > \frac{n-1}{2}$. This proves our claim.

As $\tau \in \Aut(\Z_n)$ and $\tau({1}) = k$, we have $\tau({q}) =  qk$. Hence $\tau(R) \neq R$ as ${q} \in R$ and $ qk \in L$. \hfill$\square$

\begin{lemma}\label{lemma2}
{\rm   Suppose that $ \chi \in \GLA(H_n)$  with  $\chi({0}) = {0}$. If $\chi(k) \in R$ for some $k \in R$, then $\chi(R) = R$. }
\end{lemma}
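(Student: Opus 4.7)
The plan is to put $\chi$ into a concrete algebraic form, translate the abstract graph Lie automorphism hypothesis into a direct constraint on its multiplier, and then conclude by a short disjointness argument. Since $\GLA(H_n)\subseteq \CPA((H_n)_u)=\CPA(G_n)$ and $\chi({0})={0}$, Remark~\ref{Stab} gives $\chi\in\Aut(\Z_n)$, so there is $m\in\Z_n$ with $\gcd(m,n)=1$ and $\chi(i)=mi$ for all $i\in\Z_n$.

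Next I would unpack Definition~\ref{GLA} in this setting. Set $D=\{2,4,\dots,n-1\}\subset\Z_n$, so that by the definition of $H_n$ in Section~\ref{sectionedgedirection}, $(a,b)\in E$ if and only if $a-b\in D$. By the GLA hypothesis there is a permutation $\phi$ of $\C\cup\C^-$ with $c(\chi\alpha,\chi\beta)=\phi(c(\alpha,\beta))$ for every $(\alpha,\beta)\in E$. For a fixed color $Z_k$, the edges $(\alpha,\beta)\in E$ with $c(\alpha,\beta)=Z_k$ are parametrized by $\alpha+\beta=k$ together with $\alpha-\beta\in D$; since $n$ is odd and $2$ is invertible in $\Z_n$, every $\delta\in D$ is realized as some such $\alpha-\beta$. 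Now $c(m\alpha,m\beta)$ equals $Z_{mk}$ when $m(\alpha-\beta)\in D$ and $-Z_{mk}$ when $m(\alpha-\beta)\in -D$, so for $\phi(Z_k)$ to be well defined the sign must not depend on the chosen representative. This forces $mD\subseteq D$ or $mD\subseteq -D$, and since $m$ is a bijection, $mD=D$ or $mD=-D$.

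The final step is the key observation that $D=2R$, since doubling $R=\{1,2,\dots,(n-1)/2\}$ modulo $n$ yields exactly $\{2,4,\dots,n-1\}$. Because $2$ is invertible in $\Z_n$, the dichotomy $mD=\pm D$ is equivalent to the dichotomy $mR=R$ or $mR=-R=L$. Hence $\chi(R)\in\{R,L\}$. The hypothesis that $\chi(k)\in R$ for some $k\in R$ gives $\chi(R)\cap R\neq\emptyset$, which rules out $\chi(R)=L$ because $R\cap L=\emptyset$. Therefore $\chi(R)=R$, as required.

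The principal technical step, and the one I would expect to be the main obstacle, is the second paragraph: extracting from the formal well-definedness of $\phi$ the concrete dichotomy $mD=\pm D$. This requires being careful that the representatives of a fixed color $Z_k$ in $E$ sweep out the whole of $D$ in the difference $\alpha-\beta$, so that a single inconsistent sign would already obstruct the existence of $\phi$. Once this dichotomy is established, the identity $D=2R$ and the disjointness $R\cap L=\emptyset$ close the argument with no further work.
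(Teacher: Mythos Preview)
Your proof is correct and rests on the same underlying principle as the paper's---the well-definedness of the induced permutation $\phi$ on $\C\cup\C^{-}$---but packages it more structurally. The paper argues by direct contradiction: assuming $\chi(k)=i\in R$ but $\chi(j)\in L$ for some $j\in R$, it looks only at the two edges $(k,\,n-k)$ and $(j,\,n-j)$, both of color $Z_{0}$, and observes that $\chi$ sends the first to an edge of color $Z_{0}$ and the second to one of color $-Z_{0}$, which is incompatible with the existence of $\phi$. This is precisely a single instance of your general argument, specialized to the color $Z_{0}$ and two particular differences in $D$. Your route instead extracts the full dichotomy $mD=\pm D$ (equivalently $\chi(R)\in\{R,L\}$, via the identity $D=2R$) and then uses the hypothesis $\chi(k)\in R$ together with $R\cap L=\emptyset$ to select the correct branch. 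The paper's argument is shorter for this lemma in isolation; yours is a bit longer but makes the dichotomy explicit, which is exactly the content reused when Lemmas~\ref{lemma1} and~\ref{lemma2} are combined in Proposition~\ref{last}.
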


\proof Assume that $\chi(k) = i \in R$ where $k \in R$. We note that $\chi$ is  a color permuting automorphism of the undirected edge-colored complete graph.   By Remark \ref{Stab},  $\chi \in \Aut(\Z_n)$. Hence $\chi( n-k) =  n-i \in L$. 
Assume that $\chi(j) \in L$ for some $j \in R$. 
Then $\chi( n-j) \in R$.  Hence $c \circ \chi((k,  n-k) = c(i,  n-i) = Z_{0}$ and $c \circ \chi(j,  n-j) = c(\chi(j), \chi( n-j) = -Z_{{0}}$.  We note that $c(k,  n-k) = c(j,  n-j)$ as both $k$ and $j$ are in $R$.   This is a contradiction to our assumption that $\chi \in \GLA(H_n)$. \hfill$\square$

\begin{prop}\label{last}
{\rm     If $\chi \in \GLA(H_n)$ and $\chi({0}) = {0}$, then $\chi^2 = \id$.       }
\end{prop}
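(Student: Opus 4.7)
The plan is to combine Lemmas \ref{lemma1} and \ref{lemma2} with Remark \ref{Stab} to force $\chi$ to swap the ``halves'' $R$ and $L$ of $\Z_n \setminus \{{0}\}$, and then apply the same reasoning to $\chi^2$.

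First, since $\chi \in \GLA(H_n)$ is in particular a color permuting automorphism of the underlying undirected graph $(H_n)_u = G_n$, and since $\chi({0}) = {0}$, Remark \ref{Stab} immediately yields $\chi \in \Aut(\Z_n)$. We may assume $\chi \neq \id$ (the case $\chi = \id$ being trivial). Lemma \ref{lemma1} then gives $\chi(R) \neq R$, and the contrapositive of Lemma \ref{lemma2} says $\chi(k) \notin R$ for every $k \in R$. Because $\chi$ is a bijection and $\chi({0}) = {0}$, we have $\chi(k) \neq {0}$ for $k \in R$, so in fact $\chi(k) \in L$ for every $k \in R$; that is, $\chi(R) \subseteq L$. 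Since $|R| = |L| = (n-1)/2$, this forces $\chi(R) = L$, and bijectivity then gives $\chi(L) = R$ as well.

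Now turn to $\chi^2$. Since $\GLA(H_n)$ is a group, $\chi^2 \in \GLA(H_n)$, and clearly $\chi^2({0}) = {0}$. From the previous paragraph, $\chi^2(R) = \chi(L) = R$. Suppose for contradiction that $\chi^2 \neq \id$; then the very same analysis applied to $\chi^2$ would force $\chi^2(R) = L$, contradicting $\chi^2(R) = R$. Hence $\chi^2 = \id$. The only nontrivial ingredients are Lemmas \ref{lemma1} and \ref{lemma2}, which together eliminate any ``mixed'' behavior of $\chi$ on $R$; once one knows $\chi$ must exchange $R$ and $L$, squaring and reapplying the same principle closes the argument immediately.
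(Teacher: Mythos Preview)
Your proof is correct and follows essentially the same approach as the paper: use Remark~\ref{Stab} to get $\chi\in\Aut(\Z_n)$, combine Lemmas~\ref{lemma1} and~\ref{lemma2} to show $\chi$ swaps $R$ and $L$, and then observe that $\chi^2$ fixes $R$, forcing $\chi^2=\id$. The only differences are cosmetic: you explicitly dispose of the trivial case $\chi=\id$ and spell out the cardinality argument for $\chi(R)=L$, whereas the paper leaves these implicit; and in the last step the paper simply invokes the contrapositive of Lemma~\ref{lemma1} directly (since $\chi^2\in\Aut(\Z_n)$ and $\chi^2(R)=R$), rather than rerunning the full analysis.
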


\proof If $\chi \in \GLA(H_n)$ and $\chi({0}) = {0}$, then $\chi \in \Aut(\Z_n)$ by Remark \ref{Stab}. By Lemma \ref{lemma1} and Lemma \ref{lemma2},  we have $\chi(L) = R$ and $\chi(R) = L$.  We note that $\chi^2 \in \GLA(H_n)$ and hence $\chi^2(L) = \chi(R) = L$, $\chi^2(R) = \chi(L) = R$. 
By Lemma \ref{lemma1}, $\chi^2 = \id$. \hfill$\square$

As noted before, $\GLA(H_n)$ contains all $n$ rotations. Also  Proposition \ref{last} implies that the only non identity group automorphism which is a graph Lie automorphism must be the reflection about ${0}$. This proves Theorem \ref{main2}.

\end{document}